\newtheorem{theorem}{Theorem}
\newtheorem{assumption}[theorem]{Assumption}
\newtheorem{corollary}[theorem]{Corollary}
\newtheorem{definition}[theorem]{Definition}
\newtheorem{lemma}[theorem]{Lemma}
\newtheorem{remark}[theorem]{Remark}
\newtheorem{example}[theorem]{Example}
\begin{document}
%
\title{ Contingency Analysis of Power Networks : A System Theoretic Approach}
%
%
%

\author[1]{Sambarta Dasgupta\thanks{dasgupta.sambarta@gmail.com}}
\author[2]{Umesh Vaidya\thanks{ugvaidya@iastate.edu}}
\affil[1]{Monsanto Company}
\affil[2]{ECPE Department, Iowa State University}

\maketitle
\begin{abstract}
In this work, we have proposed a system theoretic method  to compute sensitivities of different lines for $N-k$ contingency analysis in power network. We have formulated the $N-k$ contingency analysis as the stability problem of power network with uncertain links. We have derived a necessary condition for stochastic stability of the power network  with the link uncertainty. The necessary condition is then used to rank order the contingencies. We have shown due to interaction between different uncertainties the ranking can substantially change. The state of the art  $N-k$ contingency analysis does not consider the possibility of interference between link uncertainties and rank the links according to the severity of $N-1$ contingencies. We have presented simulation results for New England $39$ bus system as a support of our claim. 
\end{abstract}
\section{introduction} \label{sec_intro}
Multiple line outages in power grid can potentially result in cascade failure, and blackouts \cite{cascade_1}.  As a consequence, contingency analysis of the power network is central to the planning and operation of power systems \cite{wood2012power}. Also, identification of  critical threats to power gird under cyber attacks is necessary for safe operation  \cite{pasqualetti2011cyber}. The main premise of contingency analysis, in the context of power system, is studying the effect on the stability and performance of the power network under line outage and faults. The analysis with single link uncertainty is termed as $N-1$ contingency, whereas with multiple link uncertainties is called $N-k$ contingency ($k$ line outage among the total $N$ lines in the network).  There are various different heuristic methods to identify critical links in contingency analysis \cite{chen2005identifying, wu2011probability}. Most of the existing techniques make a tacit assumption that the relative ordering of critical links are preserved as the domain of interest is changed from $N-1$ to $N-k$ contingency analysis. Roughly speaking, this assumption is equivalent to a superposition of uncertainties in a network. In reality, this seemingly plausible assumption might be misleading.  In this work, we have demonstrated, the interaction of uncertainties in the network plays a vital role in determining the criticality. We have provided a systematic system theoretic way of computing sensitivities for different links in power network based on Lyapunov equation for $N-k$ contingency analysis. \\ 
A power network comprises of buses and transmission lines, which can be perceived as nodes and edges respectively. The problem of $N-k$ contingency analysis could be formulated as a stability problem of a network with multiple edges uncertain. This problem could further be transformed to a stability problem of a discrete time linear time varying (LTI) system with multiple sources of uncertainties. The corresponding stochastic stability problem, involving linear time invariant dynamics,  requires simultaneous search for Lyapunov function, and bounds on uncertainty to guarantee mean square stability of stochastic Linear system \cite{Boyd_book}.  \\ In this paper, a necessary condition for mean square stability is expressed in terms of the solution to  Lyapunov equation and relative measure of variance of uncertainties. The key feature of the stability condition is that the Lyapunov equation is decoupled from the uncertainties. The necessary condition is used to rank order the uncertainties. The relative sensitivity of an uncertain parameter can be found by computing gramian based gains for various uncertainty sources.  Furthermore, we have proposed a technique to compute the interaction among different uncertainties. The proposed method is tested on contingency analysis for the transient stability over the New England $39$ bus system, which is a reduced power gird model of New England and part of Canada.
\section{Problem Formulation}\label{section_set-up}
The power network is comprised of generator and load buses, which are connected to one another by transmission lines. The buses can be thought as nodes and lines as the edges in a graph. The nodes in this set up can be represented as dynamical systems.  The dynamical systems, which are represented by nodes, are then coupled through the transmission lines, which are the edges in the network.  In $N-k$ contingency studies, we investigate such a system with $k$ number of uncertain links.  The $k^{th}$ node in the network is denoted as $S_k$, and can be described as ,
\begin{equation}
S_k=\left\{\begin{array}{ccl}
x_k (t+1)&=&A_k  x_k (t) +B_k u_k (t), \\
y_k (t) &=&C_k x_k(t),\;\;\;k=1, 2, \ldots,M. \label{component_dynamics}
\end{array}\right.
\end{equation}
where $x_k \in  \mathbb{R}^{n}$, $u_k\in  \mathbb{R}$, and $y_k  \in  \mathbb{R}$ are the state, input, and output of the $k^{th}$ component sub-system respectively. $A_k  \in  \mathbb{R}^{n \times n}$ is the system matrix of the $k^{th}$ subsystem. $B_k$ and $C_k$ are column and row vectors, such that $B_k, C_k \in \mathbb{R}^{n}$. The subsystems, which form the nodes of the network, form the inter connected network, as can be observed in Fig. \ref{sche1}. The input to $k^{th}$ sub-system is a linear combination of the outputs from the all the subsystems. The input to the $k^{th}$ sub-system, can be modeled as, $u_k (t)=\sum_{\ell=1, \ell \neq k}^{M} \mu_{k \ell}  \; \left ( a_{k \ell} y_k (t) + b_{k \ell} y_\ell (t) \right ), $ where, $a_{k \ell}, b_{k \ell}, \mu_{k \ell}$'s are scalar quantities (depending on the network topology these will take values). The input to a particular subsystem is comprised of the feedbacks coming from all the other subsystems. The contribution of link $ (k, \ell) $ to the $u_k$ is $ \mu_{k \ell}  \; \left ( a_{k \ell} y_k (t) + b_{k \ell} y_\ell (t) \right )$, which is a weighted sum of the outputs of the subsystems at its two ends, namely $y_k(t)$ and $y_{\ell}(t)$.  Next, for purpose of contingency analysis, we consider some of the links in the network are uncertain. Let us denote the set of all uncertain link as \[ \mathcal{S} := \{(k, \ell) | (k, \ell) \text {is an uncertain link} , \ell \neq k \} . \] The uncertainty in a link $(k, \ell) \in \mathcal{S}$ is modeled as $\delta_{k \ell}(t)$, which is an i.i.d. sequence of random variables with mean $0$, and variance $\sigma_{k \ell}^2 > 0$. The uncertain link is represented as $\xi_{k \ell}(t) := \mu_{k \ell} + \delta_{k \ell}(t)$.
The input with the uncertain link would become, $ u_k(t)  = \sum_{\ell =1 , \ell \neq k}^{M} \mu_{k \ell}  \; \left ( a_{k \ell} y_k (t) + b_{k \ell} y_\ell (t) \right )  + \sum_{\ell | (k, \ell) \in \mathcal{S} }  \delta_{k \ell}(t)  \; \left ( a_{k \ell} y_k (t) + b_{k \ell} y_\ell (t) \right ) . $  We define,   $ x (t) := [ x^T_1 (t), x^T_2(t), \ldots, x^T_M (t)]^T \in \mathbb{R}^{M n} .  $ 
 \begin{figure}[h]
    \centering
    \includegraphics[width=0.45\textwidth]{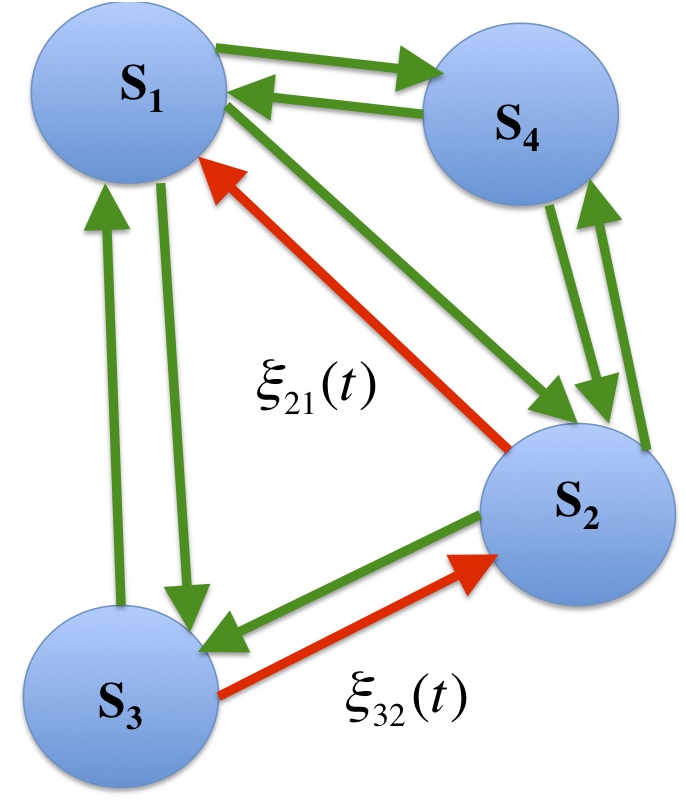}
  \caption{ Schematic of the Power network as an inter connected system, where the red edges represent different contingencies.}
 \label{sche1}
\end{figure}
 We also define,
\[ C_{k \ell} := (\underbrace{0 \ldots, 0}_{n (k-1)}, ~~ a_{k \ell} C_{ k}, 0, ~~\ldots, 0 ,  ~~ b_{k \ell} C_{\ell}, ~~ \underbrace{0 \dots, 0}_{(M- \ell) n }) \in \mathbb{R}^{Mn}. \]
With this setup input to $k^{th}$ subsystem could be written as,
\begin{align*}
 u_k(t) &= \sum_{\ell=1, \ell \neq k}^{M} \mu_{k \ell}   \;   C_{k \ell} \; x (t) + \sum_{\ell | (k ,\ell) \in \mathcal{S} }^{M} \delta_{k \ell}(t)   \;   C_{k \ell} \; x (t) .
 \end{align*}
Next, we combine the mean contribution of the links into the nominal dynamics and separate out the contribution of the uncertainty in the dynamics. The system Eqs. (\ref{component_dynamics})  can be written in compact form as following:
  \begin{equation}
   x(t+1) = A x(t) + \sum_{(k, \ell) \in \mathcal{S}} \delta_{ k \ell}(t)\;  \bar B_k \;  {C}_{k \ell} \; x (t),  \label{system_robust-form}
   \end{equation}
  where, $   A : = diag \left( A_1, \dots, A_M \right) + \sum_{k,\ell=1 , \ell \neq k }^{M} \; \mu_{k \ell} \; \bar B_k \;  C_{k \ell} \in \mathbb{R}^{Mn \times Mn},  $
and $\bar B_k$ is a column vector of size $M n$ and is obtained by stacking zero, $B_k$ column vector starting at $n(k-1)$ location.  The nominal network system, without presence of link uncertainty, shows stable behavior. Our goal is to find out relative amount of uncertainties, that can be tolerated without causing instability.  First we describe the stability assumption for the nominal deterministic system,
\begin{eqnarray}
x (t+1)=A  x(t). \label{nominal_system}
\end{eqnarray}
\begin{assumption}
The matrix $A$, as described in \eqref{nominal_system}, has all the eigenvalues inside the unit circle. 
\end{assumption}
We also make two more assumptions on the nominal system. The first one guarantees a uniform lower bound on system matrices. This technical assumption is needed in the proof of the Theorem \ref{theorem_main1}.
\begin{assumption} \label{ja_bound}
The matrix $A(t)$ from \eqref{nominal_system} is  lower bounded i.e.
\[ A^T A \ge L >  0, \forall t. \]
\end{assumption}
\begin{assumption} \label{assum_rec}
The pair $(A, C_{k \ell})$ is  observable \cite{paganini_dull}, for all $({k, \ell}) \in \mathcal{S}$.
\end{assumption}
The Assumption \ref{assum_rec} ensures the observability gramian exists and well defined for all input directions $C_{k \ell}$. The necessary condition, that we derive later, is in terms of the observability gramians. We make the Assumption \ref{assum_rec} in order to ensure the existence of those.
In order to analyze the stability of the system, described by \eqref{system_robust-form} we need the following notion of stochastic stability.
\begin{definition} \label{stab_def}
The system, described by \eqref{system_robust-form}, is mean square exponentially stable if there exists  $K>0$, and $\beta < 1$ such that,
\[ E \parallel x(t+1) \parallel^2 < K \beta^t \parallel x(0)\parallel^2 , \forall x(0). \]
\end{definition}
\section{Main Results} \label{main_res}
In this section, we state and prove necessary condition for mean square stability of \eqref{system_robust-form}. The necessary condition involves the gramians for various input directions and thebounds on the uncertainties.  
\begin{theorem}\label{theorem_main1}
The necessary condition for the mean square exponential stability of the  system (\ref{system_robust-form}) is given by,
\begin{align}
\sum_{ (k, \ell) \in \mathcal{S}}  \sigma_{k \ell}^2  \bar B^T_k {P}  \bar B_k  C_{k\ell}   C^T_{k\ell}  \le   \bar \alpha   \sum_{(k, \ell) \in \mathcal{S}}   C_{k\ell}   C^T_{k\ell},  \label{ellipse_condition}
\end{align}
 where,   the matrix $P = P^T > 0$, satisfies, $ A^T {P} A - P  =  -  \sum_{(i,j) \in \mathcal{S}} C^T_{i j}   C_{i j} , $
  and $\bar \alpha \ge 1$ is a scalar quantity.
\end{theorem}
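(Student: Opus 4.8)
The plan is to study the expected evolution of the quadratic form $V(x) = x^T P x$ along the trajectories of \eqref{system_robust-form}, where $P = P^T > 0$ is the solution of the stated Lyapunov equation. Existence and positivity of $P$ follow from the stability of $A$ together with the observability guaranteed by Assumption~\ref{assum_rec}, since $P = \sum_{s \ge 0} (A^T)^s \bigl( \sum_{(i,j) \in \mathcal{S}} C_{ij}^T C_{ij} \bigr) A^s$. First I would compute the expected one-step increment of $V$. Conditioning on $x(t)$ and using that the $\delta_{k \ell}(t)$ are zero-mean, i.i.d., mutually independent across links with variance $\sigma_{k\ell}^2$, and independent of $x(t)$, every cross term between the nominal channel $A$ and an uncertainty channel, as well as every cross term between two distinct channels, vanishes in expectation. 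Pulling the scalar $\bar B_k^T P \bar B_k$ out of each quadratic form gives
\begin{align*}
E\left[ V(x(t+1)) \mid x(t) \right] = x(t)^T A^T P A\, x(t) + \sum_{(k,\ell) \in \mathcal{S}} \sigma_{k\ell}^2 \bigl(\bar B_k^T P \bar B_k\bigr)\, x(t)^T C_{k\ell}^T C_{k\ell}\, x(t).
\end{align*}

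Substituting the Lyapunov equation in the form $A^T P A = P - \sum_{(i,j) \in \mathcal{S}} C_{ij}^T C_{ij}$ collapses the nominal term and expresses the expected drift entirely through the gramian gains and the output directions:
\begin{align*}
E\left[ V(x(t+1)) \mid x(t) \right] - V(x(t)) = x(t)^T M\, x(t), \quad M := \sum_{(k,\ell) \in \mathcal{S}} \sigma_{k\ell}^2 \bigl(\bar B_k^T P \bar B_k\bigr) C_{k\ell}^T C_{k\ell} - \sum_{(i,j) \in \mathcal{S}} C_{ij}^T C_{ij}.
\end{align*}
The asserted condition \eqref{ellipse_condition} is precisely the statement that the positive part of $M$ is dominated by $\bar\alpha$ times the nominal observability matrix $\sum_{(i,j) \in \mathcal{S}} C_{ij}^T C_{ij}$.

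Next I would invoke mean-square exponential stability. Taking full expectations and telescoping the increments yields $E[V(x(T))] - V(x(0)) = \sum_{t=0}^{T-1} E\left[ x(t)^T M x(t) \right]$. Since Definition~\ref{stab_def} forces $E[V(x(t))] \le \lambda_{\max}(P)\, E\|x(t)\|^2 \to 0$, letting $T \to \infty$ produces the summability constraint
\begin{align*}
\sum_{t=0}^{\infty} E\left[ x(t)^T M\, x(t) \right] = - x(0)^T P\, x(0) < 0 \quad \text{for every } x(0).
\end{align*}
Introducing the accumulated second moment $W := \sum_{t \ge 0} E\bigl[ x(t) x(t)^T \bigr]$, which is finite by stability, this reads $\mathrm{tr}(M W) < 0$: the uncertainty gains, weighted by the directions the closed loop actually excites, cannot outweigh the nominal dissipation, so $M$ cannot be positive semidefinite along those directions.

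The main obstacle is the final step, namely passing from this single trace inequality (one per initial condition, encoded through $W$) to the \emph{pointwise} matrix inequality \eqref{ellipse_condition} with a finite scalar $\bar\alpha \ge 1$. This is where Assumption~\ref{ja_bound} does the essential work: the uniform lower bound $A^T A \ge L > 0$ gives $E\|x(t+1)\|^2 \ge \lambda_{\min}(L)\, E\|x(t)\|^2$, so the state cannot collapse into a proper subspace in finite time and the family $\{W\}$ obtained as $x(0)$ ranges over all directions probes $M$ in every output direction $C_{k\ell}$. Assumption~\ref{assum_rec} guarantees that $\sum_{(i,j) \in \mathcal{S}} C_{ij}^T C_{ij}$ is positive on the span of the $C_{k\ell}^T$, so that, both sides of \eqref{ellipse_condition} sharing this common range, a finite ratio $\bar\alpha$ comparing them exists and the comparison is non-vacuous. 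I expect the careful bookkeeping that converts the lower bound $\lambda_{\min}(L)$ into an explicit admissible value of $\bar\alpha$, while keeping the accumulated negativity of the drift intact, to be the most delicate part of the argument.
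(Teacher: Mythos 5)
Your Lyapunov-drift computation and the telescoping identity are correct, and they constitute a genuinely more direct route than the paper's (the paper instead constructs a bounded matrix $\hat P$ solving a perturbed Lyapunov inequality as an infinite series, and then performs an elaborate orthogonal decomposition of the forcing term into $\sum_{(k,\ell)} \alpha^{k\ell} C_{k\ell}^T C_{k\ell} + Q$ with $C_{k\ell}Q=0$). However, your argument stops exactly where the work is. You arrive at $\mathrm{tr}(MW) = -x(0)^T P x(0) < 0$ for each initial condition, and then only \emph{describe} the passage to \eqref{ellipse_condition} as ``the most delicate part,'' invoking Assumptions \ref{ja_bound} and \ref{assum_rec} in a way that produces neither the inequality nor the constant $\bar\alpha$. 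The statement that the state ``cannot collapse into a proper subspace'' and therefore ``probes $M$ in every output direction'' is not an argument: nothing in the proposal pins down a finite $\bar\alpha$ or shows $\bar\alpha \ge 1$. This is a genuine gap, not a routine verification.

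Two remarks that matter for closing it. First, you have slightly misread the target: since $C_{k\ell}$ is a row vector, $C_{k\ell}C_{k\ell}^T = \|C_{k\ell}\|^2$ is a scalar, so \eqref{ellipse_condition} is a single scalar inequality $\sum_{(k,\ell)}\sigma_{k\ell}^2\,(\bar B_k^T P \bar B_k)\,\|C_{k\ell}\|^2 \le \bar\alpha \sum_{(k,\ell)}\|C_{k\ell}\|^2$, not a statement that ``the positive part of $M$'' is dominated in the matrix order; your actual task is therefore easier than you think. Second, the gap can be closed entirely within your framework, and without Assumption \ref{ja_bound}: your identity gives $\sum_{(k,\ell)}\sigma_{k\ell}^2(\bar B_k^T P \bar B_k)\, C_{k\ell} W C_{k\ell}^T < \sum_{(k,\ell)} C_{k\ell} W C_{k\ell}^T$. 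Bound the right-hand side above by $\mathrm{tr}(W)\sum_{(k,\ell)}\|C_{k\ell}\|^2 \le K'\|x(0)\|^2\sum_{(k,\ell)}\|C_{k\ell}\|^2$ using Definition \ref{stab_def}; bound the left-hand side below by keeping only the $t=0$ term of $W$, so $C_{k\ell}WC_{k\ell}^T \ge |C_{k\ell}x(0)|^2$, and average $x(0)$ uniformly over the unit sphere to replace $|C_{k\ell}x(0)|^2$ by $\|C_{k\ell}\|^2/(Mn)$. This yields \eqref{ellipse_condition} with the explicit constant $\bar\alpha = MnK' \ge 1$. Until some such step is written down, what you have is an outline of a promising (and arguably cleaner) proof, not a proof.
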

\vspace{0.1 in}
The proof of  Theorem \ref{theorem_main1} is presented in the Appendix (Section \ref{appen}).
\begin{remark}
Theorem \ref{theorem_main1} provides a necessary condition for mean square exponential stability of the network, described by \eqref{system_robust-form}. Before outlining the proof of Theorem \ref{theorem_main1}, we discuss the insight and the implications of the theorem. The matrix ${P}$ is the observability gramian, for all the uncertainty injection directions. The term $\left (   \bar B^T_k P  \bar B_k  \right ) C_{i j}   C^T_{i j}  $ is the gain, seen by the $(k, \ell)$ link, if uncertainty is injected at all the links in $\mathcal{S}$, and this term accompanies $\sigma_{k \ell}^2$ in the expression. The implication of it is that the bound of the uncertainty for a particular link will be less, if a uncertainty injection at that link has large magnification. Also, it can be seen the \eqref{ellipse_condition} is coupled for all $\sigma_{k \ell}$'s, which allows to trade amount of uncertainties for different links. Increase in uncertainty in one link could be compensated for decrease in the other. This condition would be used to compute sensitivity of different links for $N-k$ contingency case. The following corollary gives the necessary condition for mean square stability for single link uncertain, which would be used to compute sensitivity for $N-1$ contingency.
\end{remark}
\begin{corollary} \label{SISO_con}
The necessary condition with single link $(k, \ell)$ uncertain is, 
\begin{align*} 
\sigma_{kl}^2 B_k^T  P^{k \ell} B _k  < 1,  ~ A^T {P}^{k \ell} A - P^{k \ell}   = -C_{k \ell}^T C_{k \ell} . 
 \end{align*}
\end{corollary}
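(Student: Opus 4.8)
The plan is to derive Corollary \ref{SISO_con} directly as the single-link specialization of Theorem \ref{theorem_main1}. In the single-link case the uncertain set is $\mathcal{S} = \{(k,\ell)\}$, so every sum over $\mathcal{S}$ in the theorem collapses to a single term. First I would write down the Lyapunov equation from the theorem with this restricted $\mathcal{S}$: the defining equation $A^T P A - P = -\sum_{(i,j)\in\mathcal{S}} C_{ij}^T C_{ij}$ becomes $A^T P^{k\ell} A - P^{k\ell} = -C_{k\ell}^T C_{k\ell}$, which is exactly the gramian equation asserted in the corollary. I relabel $P$ as $P^{k\ell}$ to emphasize its dependence on the single injection direction $(k,\ell)$.

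Next I would substitute the singleton $\mathcal{S}$ into the necessary condition \eqref{ellipse_condition}. Both the left- and right-hand sides reduce to a single term, giving
\begin{align*}
\sigma_{k\ell}^2 \, \bar B_k^T P^{k\ell} \bar B_k \, C_{k\ell} C_{k\ell}^T \le \bar\alpha \, C_{k\ell} C_{k\ell}^T.
\end{align*}
The outer-product matrix $C_{k\ell} C_{k\ell}^T$ is common to both sides and is positive semidefinite of rank one; since $C_{k\ell}\neq 0$ (guaranteed by the observability Assumption \ref{assum_rec}), this matrix inequality reduces to the scalar inequality $\sigma_{k\ell}^2 \, \bar B_k^T P^{k\ell} \bar B_k \le \bar\alpha$ by comparing the coefficients along the range of $C_{k\ell}$. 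The term $\bar B_k^T P^{k\ell} \bar B_k$ equals $B_k^T P^{k\ell} B_k$ because $\bar B_k$ is merely $B_k$ padded with zeros and $P^{k\ell}$ is the corresponding block structured gramian, so the zero padding annihilates the off-block entries. Taking the tightest admissible constant $\bar\alpha = 1$ (its minimal allowed value) and noting the strict inequality in the mean-square-stability definition yields $\sigma_{k\ell}^2 \, B_k^T P^{k\ell} B_k < 1$, which is the claimed condition.

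The main obstacle is justifying the passage from the rank-one matrix inequality to the scalar inequality, and verifying that specializing the constant $\bar\alpha$ to $1$ is legitimate. For the matrix-to-scalar reduction I would contract both sides with a vector in the range of $C_{k\ell}^T$ (equivalently, left- and right-multiply by a unit vector aligned with $C_{k\ell}$); since $C_{k\ell} C_{k\ell}^T = \|C_{k\ell}\|^2 \hat c \hat c^T$ for the unit vector $\hat c = C_{k\ell}/\|C_{k\ell}\|$, the common positive factor $\|C_{k\ell}\|^2$ cancels and leaves precisely the scalar comparison. The remaining care is simply to track that the strict inequality in Definition \ref{stab_def} propagates through the specialization so that the single-link condition is strict, which matches the statement of the corollary.
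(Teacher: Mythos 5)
Your overall plan---specialize Theorem \ref{theorem_main1} to the singleton $\mathcal{S}=\{(k,\ell)\}$---is the route the paper intends (it offers no separate proof of the corollary), and the Lyapunov-equation part and the identification $\bar B_k^T P^{k\ell}\bar B_k = B_k^T P^{k\ell} B_k$ are fine modulo the paper's own loose notation. A small point first: since $C_{k\ell}$ is a \emph{row} vector in $\mathbb{R}^{Mn}$, the quantity $C_{k\ell}C_{k\ell}^T$ appearing in \eqref{ellipse_condition} is the scalar $\|C_{k\ell}\|^2$, not a rank-one matrix (the matrix $C_{k\ell}^T C_{k\ell}$ lives only in the Lyapunov equation); your ``contract with a unit vector'' step is therefore unnecessary---you simply divide both sides by the positive scalar $\|C_{k\ell}\|^2$, which Assumption \ref{assum_rec} guarantees is nonzero.

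The genuine gap is the step ``taking the tightest admissible constant $\bar\alpha=1$.'' Theorem \ref{theorem_main1} asserts only that the inequality holds for \emph{some} $\bar\alpha\ge 1$; in the paper's proof this constant is $\bar\alpha=\gamma_1\mathcal{C}^u M^2 n^2/(\epsilon\mathcal{C}^l)$, which is in general much larger than $1$. A necessary condition of the form $\sigma_{k\ell}^2 \bar B_k^T P^{k\ell}\bar B_k\le\bar\alpha$ with large $\bar\alpha$ does not imply the strictly stronger condition $\sigma_{k\ell}^2 \bar B_k^T P^{k\ell}\bar B_k< 1$, so you cannot obtain the corollary from the theorem's \emph{statement} by fiat. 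You have to re-enter the proof: with a single uncertain link there is only one coefficient $\alpha^{k\ell}$, hence $\alpha_*=\alpha^{k\ell}$ and the ratio $\alpha^{k\ell}/\alpha_*$ in the final chain of inequalities equals exactly $1$; moreover, decomposing $C_{k\ell}$ along itself gives $\kappa=1$, so $\alpha^{k\ell}\ge\sigma_{k\ell}^2\bar B_k^T\hat P\bar B_k+\epsilon/(N^*\mathcal{C}^u)$, and the strictly positive $\epsilon$-term (which traces back to $\beta<1$ in Definition \ref{stab_def} via Lemma \ref{lem1}) is what turns the $\le$ into $<$. Alternatively, and more cleanly, argue directly from \eqref{lyap_in}: it gives $\hat P>\sigma_{k\ell}^2(\bar B_k^T\hat P\bar B_k)\,P^{k\ell}$ by comparison of Lyapunov solutions, whence $\bar B_k^T\hat P\bar B_k>\sigma_{k\ell}^2(\bar B_k^T\hat P\bar B_k)(\bar B_k^T P^{k\ell}\bar B_k)$, and dividing by $\bar B_k^T\hat P\bar B_k>0$ yields the corollary without any appeal to $\bar\alpha$.
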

Next, we would use the necessary condition for contingency analysis.
\section{$N-k$ ~ Contingency Analysis}
In this section we would apply the necessary condition to compute the sensitivities corresponding to various different links for $N-k$ contingency analysis. 
\subsection{Sensitivities of various links} \label{crit_idn}
 Equation (\ref{ellipse_condition}) provides us with a computable condition for computing relative sensitivity of uncertainty links. In particular, we notice that the Eq. (\ref{ellipse_condition}) is an equation of an ellipsoid and length of the axis along the direction of $\sigma_{k \ell}$ is proportional to,
\begin{eqnarray}
F_{k\ell}:=\left (   \bar B^T_k  {P}  \bar B_k    C_{k\ell}  C^T_{k\ell}    \right )^{-\frac{1}{2}}, ~~(k, \ell) \in \mathcal{S} . \label{rand_order}
\end{eqnarray}
which gives the relative sensitivity of link ${k\ell}$. The larger the value of $F_{k\ell}$ more uncertainty can be tolerated in the random parameter $\delta^{k\ell}$, which will make the link relatively less critical in the contingency analysis.
\subsection{Characterization of Uncertainty Interaction}
Next, we would propose an index to capture how much a set of uncertainties are interacting in the network. Let us consider a set of link uncertainties. If the uncertainties are present one at a time ($N-1$ contingency), then from Corollary \ref{SISO_con} the bounds on the uncertainties, $\sigma_{k \ell}$,  would be proportional to \[ S_{k \ell} := \left ( \bar B_k^T  P^{k \ell} \bar B _k \right)^{- \frac{1}{2}}. \] We define a vector $S$, which is composed of the $S_{k \ell}$'s. Again, if the uncertainties are working simultaneously ($N-k$ contigency), then $F_{k \ell}$'s provide the relative bound on $\sigma_{k \ell}$. We form another vector $F$ from $F_{k \ell}$'s. If the interaction among the uncertainties are small, then the vector $F$ and $S$ will be aligned, which would make the quantity $\frac{F^T S}{\parallel F \parallel \parallel S \parallel}$ very close to $1$. Or in other words,  $1- \frac{F^T S}{\parallel F \parallel \parallel S \parallel}$ would be very close to $0$. From this intuition, we define the index capturing the interaction as following,
\[ I := 1- \frac{F^T S}{\parallel F \parallel \parallel S \parallel} . \]
It can be observed, that $0\le I \le 1$, and higher value of the index signifies more interaction among the uncertainties.
\begin{example}
In the following example, we demonstrate the importance of identifying relative criticality of uncertainties, and interaction of uncertainties.
Let us consider the uncertain system, $x(t+1) = A x(t) + \delta_1 B_1 C_1 x(t) + \delta_2 B_2 C_2 x(t)$ , where, $\delta_1, \delta_2$ are i.i.d. random variables with $0$ mean, and variance $\sigma_1^2, \sigma_2^2$  respectively. Also, 
\begin{align}
& A=
\begin{bmatrix}
   -0.07   ~  1.00 ~  -0.23\\
   ~ 0.10~  ~  0.70 ~  -0.10\\
   -0.17  ~  1.00  ~  -0.13
\end{bmatrix},   \nonumber  \\
& B_1 = [- 1 ~ -1 ~ 0] ^T, ~~C_1 = [ 1 ~ 1 ~ 0 ], \nonumber \\
&  B_2 =[ -1 ~ 1 ~ 0 ], ~~ C_2 = [ -5 ~ -0.1 ~ 0.01 ]. \label{eq_example}
\end{align}
Eigenvalues of the matrix $A$ are $0.7,0.1, \text{and} - 0.1$ respectively, which makes the nominal system stable. The necessary and sufficient condition for mean square stability  for  given $\sigma_1^2, \sigma_2^2$ will be there exists a $\hat P= \hat P^T > 0$ such that \cite{Boyd_book}, 
\[ A^T \hat P A - \hat P + \sigma_1^2 B_1^T \hat P B_1 C_1 C_1 ^T + \sigma_2^2 B_2^T \hat P B_2 C_2 C_2 ^T < 0.  \]
From this condition, we compute the feasible values of the $\sigma_1, \sigma_2$, and plot them in Fig. \ref{fig_example} (a) . The maximum allowable $\sigma_1, \sigma_2$ can be computed from condition in Corollary \ref{SISO_con}, which involves computing the bounds taking one uncertainty at a time. From the plot, it can be observed that the parameter $\sigma_2$ is more sensitive than $\sigma_1$. Now, if for robust design one solves the problem of finding an upper bound on  $\sigma_1, \sigma_2 \le \sigma$, the value of $\sigma$ would be dominated by the relatively more critical uncertainty $\delta_2$. The red line passing through origin is the line $\sigma_2 = \sigma_1$ and the red square ( $\sigma_1, ~\sigma_2 \le \sigma = 0.11$ ) provides a bound on the uncertainties to guarantee stability. The size of the square is very much governed by the relatively sensitive uncertainty, which is $ {0.11}^2 = 0.01$.  Now, if we consider the relative sensitivity in the uncertainty and scale them accordingly, we can improve the area of the uncertainty region. We compute the sensitivity of the two parameters according to the formula \eqref{rand_order}. If we scale the uncertainties and try to find out modified bounds as, $\frac{\sigma_1}{F_1},\frac{ \sigma_2}{F_2} \le \sigma $, the resulting region turns out to be the one shown as the green rectangle. The construction is commenced by drawing the line $\sigma_2 = \frac{F_1}{F_2} \sigma_1$, and finding out the intersection point with the boundary of the feasibility region. The area of the green is $0.038$, which is more than the area for the red square. Also, we do similar construction with the line $ \sigma_2 = \frac{S_1}{S_2} \sigma_1$, where $S_1, S_2$ are computed considering one uncertainty at a time. The area of the blue rectangle turns out to be $0.032$. This shows by scaling the weights appropriately, for stability condition improved bounds could be obtained on the uncertainty.  The two extreme ends of the boundary curve of the set of feasible points $(\sigma_1, \sigma_2)$  could be found from the SISO condition (corollary \ref{SISO_con}). Next, we demonstrate the case when the interaction is less. We consider the second example, where the $A$ matrix is the same \eqref{eq_example},
\begin{align}
 B_1 & = [ 1 ~ 0 ~ -1] ^T, ~~ C_1 = [ -1 ~ 1 ~ 1 ], \nonumber \\
 B_2  & =[ -1 ~ -1 ~ 0 ], ~~ C_2 = [ -5 ~ -1 ~ 1 ]. \label{eq_example2}
\end{align}
 \begin{figure}[h]
\begin{center}
\mbox{
\subfigure[]{\scalebox{.8}{\includegraphics[width=2.5 in, height=2.4 in]{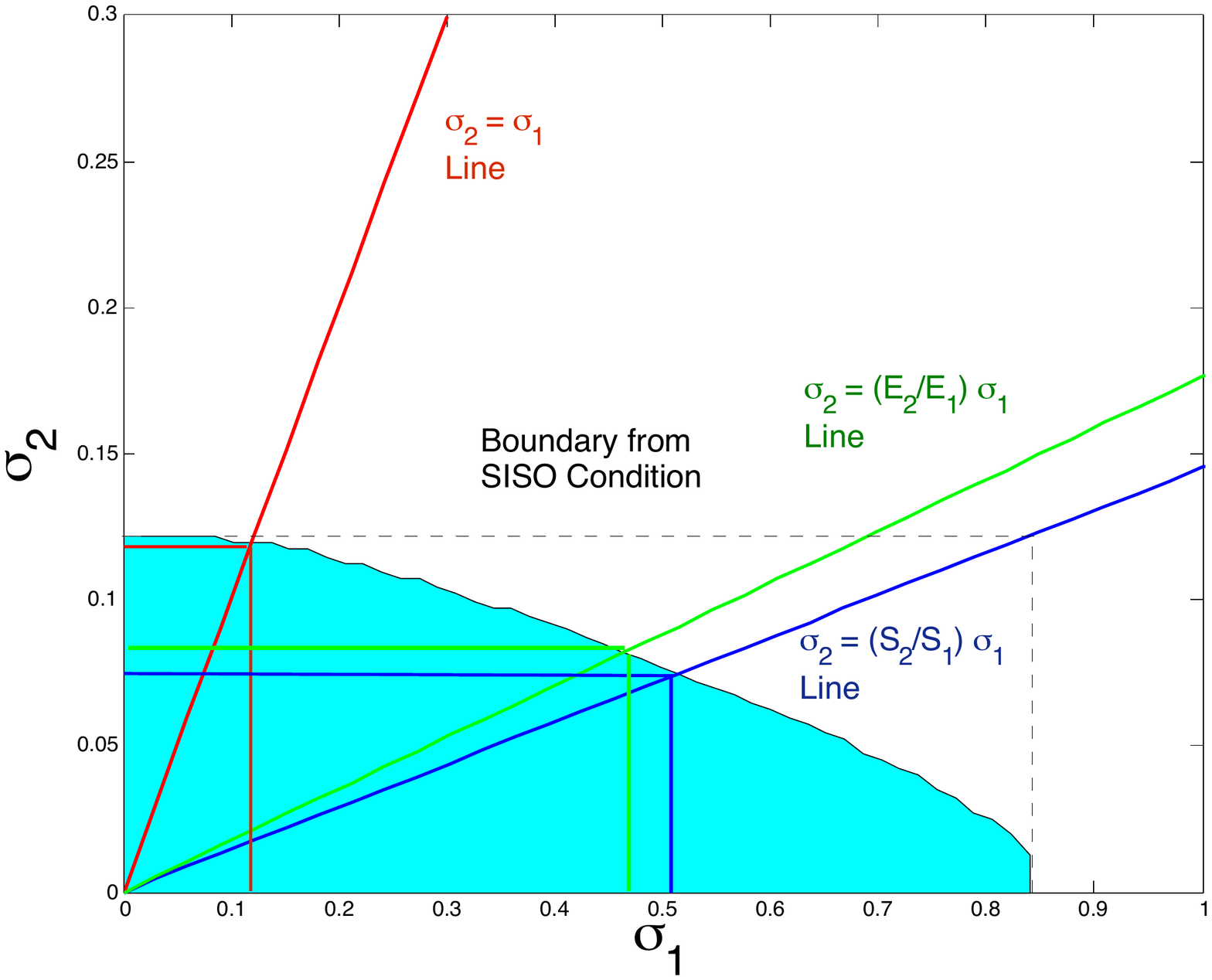}}}
\subfigure[]{\scalebox{.26}{\includegraphics[width=7.7 in, height=7.5 in]{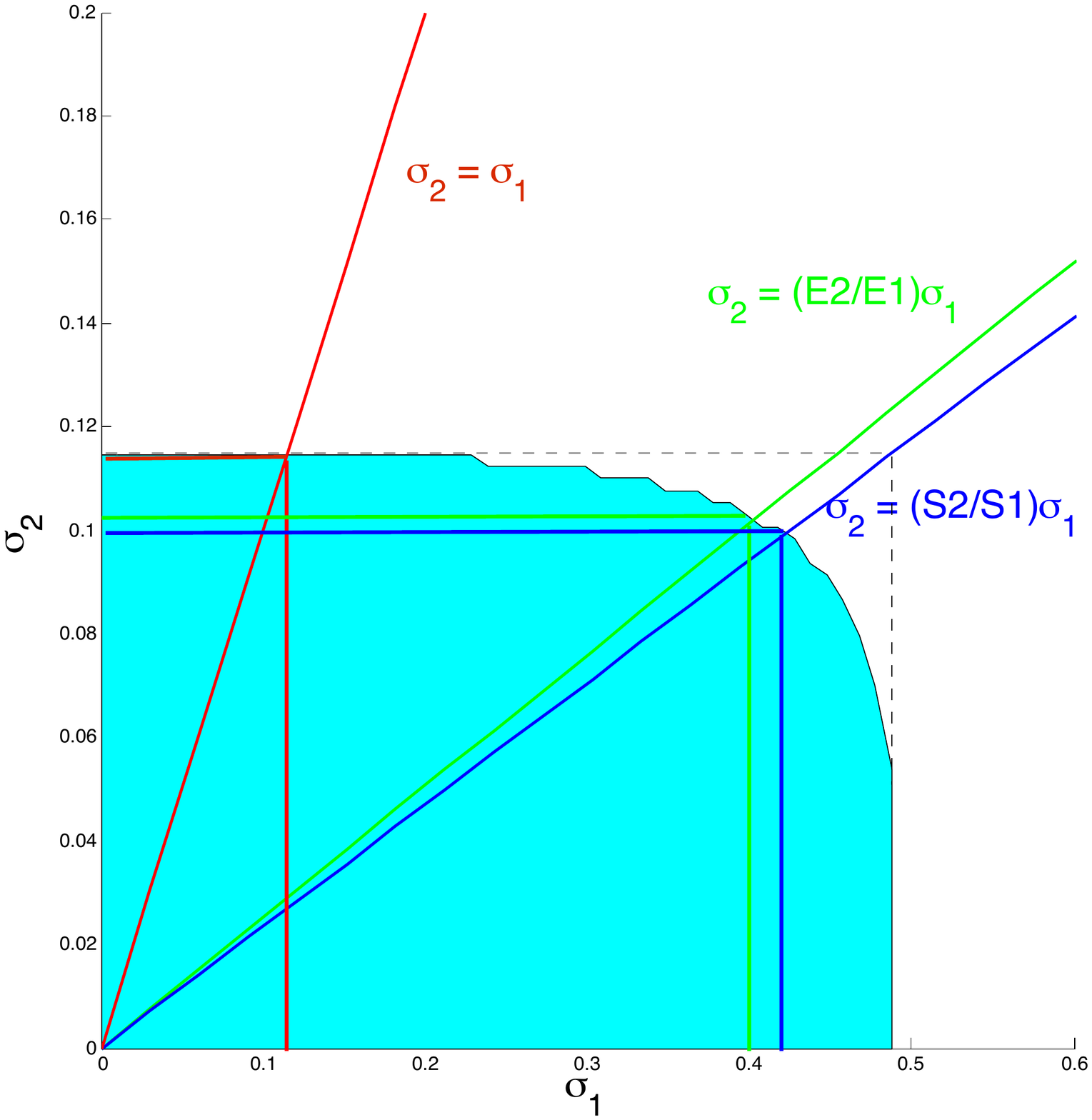}}} }
\caption{ The feasibility region of uncertainties when input output vectors - (a) described by  \eqref{eq_example},  (b) described by  \eqref{eq_example2} . }
\label{fig_example}
\end{center}
\end{figure}
In this case, the feasibility region along with the two rectangles are shown in Fig. \ref{fig_example} (b).  In this case the feasibility region almost fills the dotted rectangle, which comes from the SISO condition. This indicates the two uncertainties interact lesser. Also it can be noticed, the green and blue rectangles almost merge, as for lesser interaction the $\frac{F_2}{F_1} \approx \frac{S_2}{S_1}$. Comparing Fig. \ref{fig_example} (a)  and Fig. \ref{fig_example} (b), it can be concluded as the interaction becomes more the difference between SISO and MIMO bounds become more. In simulation section, we demonstrate that for multiple uncertainties, even the relative ranking can get completely changes due to interaction.
\end{example}
\section{Simulation Results} \label{sim_res}
The method, we have developed, is general in nature and could be used for contingency analysis for different types of stability problems in power network. In this section, we have considered the contingency analysis of transient stability of power network, which is has drawn research efforts in the past \cite{cont_transience_1}. For simulation purposes, we have chosen New England $39$ bus system.  We have considered  $N-k$ contingency problem for $k=4$, and have chosen two different set of contingencies. For one set of contingencies the raking for $N-1$, and $N-k$ remain the same, whereas for the second set of contingencies it completely changes. This suggests a strong need for considering uncertainty interaction into account, while ranking the uncertainties in $N-k$ scenario. 
\begin{figure}[h]
    \centering
    \includegraphics[width=0.50 \textwidth]{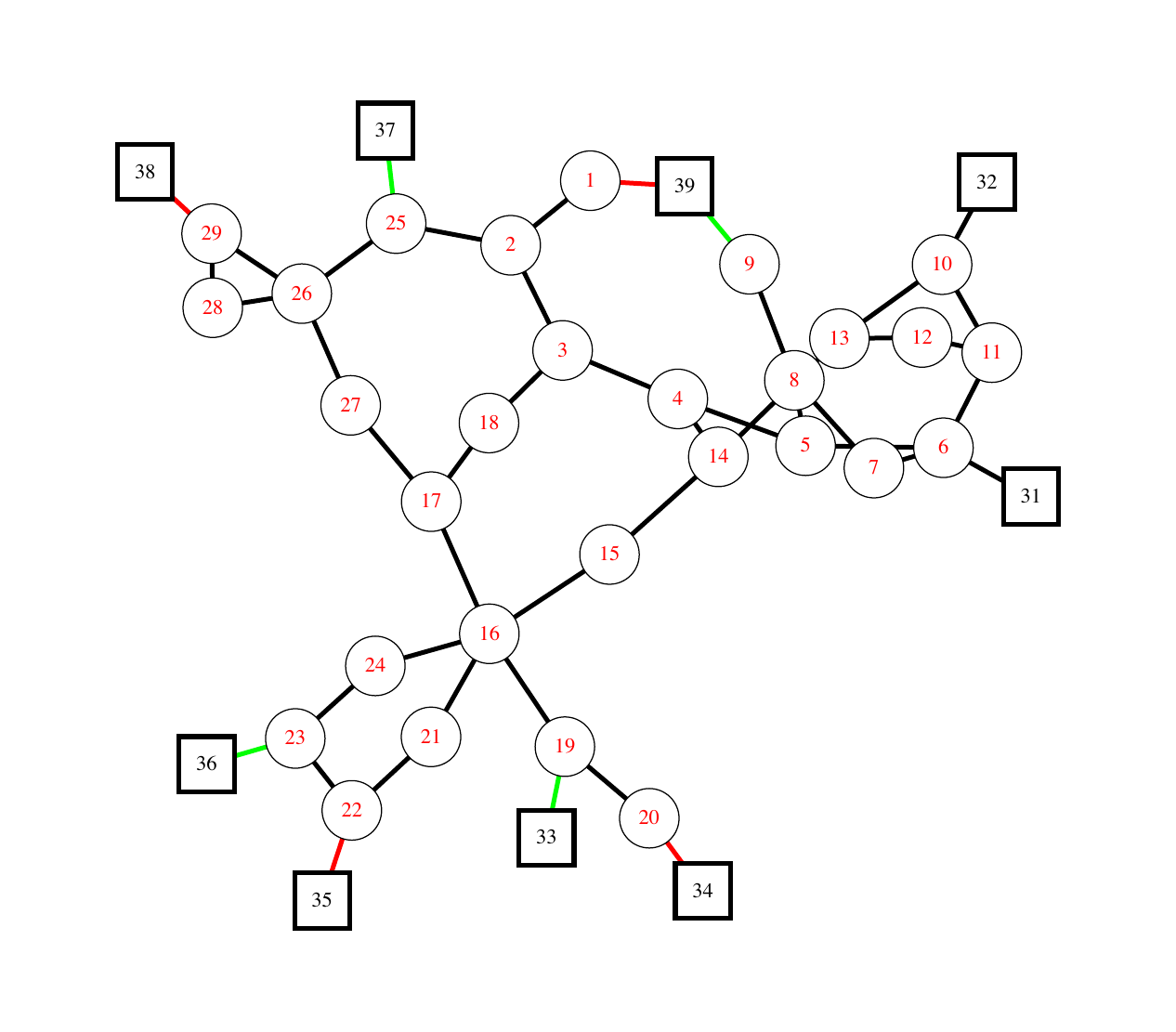}
  \caption{New England $39$ Bus system with load and generator buses are demarcated as squares and circles respectively, and also two set of contingencies are colored in green and red.}
 \label{fig_39_BUS_System}
\end{figure}
The dynamic model of the power network comprises of the rotor angle swing dynamics, as described in \cite{pasqualetti2011cyber}. In this case, the state vector consists  of $[\delta^T \omega^T \theta^T]^T$, where these are rotor angle, frequency and phase angles respectively. The network is comprised of 10 generator and 29 load buses, where some of the buses represent aggregate of individual generators and loads. The connection diagram of 39 bus system is shown in Fig. \ref{fig_39_BUS_System}. Let $\mathcal{L}$ be the laplacian matrix for the network. Also, the laplacian matrix could be decomposed as, $ \mathcal{L} := 
 \begin{bmatrix}
 \mathcal{L}_{ll} ~~\mathcal{L}_{lg}\\
 \mathcal{L}_{gl} ~~ \mathcal{L}_{gg}
 \end{bmatrix} . $
Under steady state conditions, we consider the linearized dynamics. The phase angles could be eliminated from the state equation using Kron reduction. After doing Kron reduction, as described in \cite{pasqualetti2011cyber},
\begin{equation*}
  \begin{bmatrix}
\dot\delta^T (t)\\
\dot \omega^T (t)\\
 \end{bmatrix} = -
  \begin{bmatrix}
 0 ~~~ ~~~~~~~~~~~~~~~~~~~~~~~-I \\
  M^{-1} \left ( \mathcal{L}_{gg} - \mathcal{L}_{gl} \mathcal{L}^{-1}_{ll}\mathcal{L}_{lg} \right)  ~~ M^{-1}D_g \\  
 \end{bmatrix} 
\begin{bmatrix}
\delta^T (t)\\
 \omega^T (t)\\
 \end{bmatrix}
\end{equation*}
The reduced state equation describes the linearized rotor angle and frequency evolution at generator buses. The reduced system has $10$ generators as nodes and each of the generators has $2$ states, namely rotor angle and frequency.  It can be noted that the inversion of $L_{ll}$ matrix, which was sparse in nature, results in a fully connected reduced system.
The states of the reduced system is denoted as $x =[\delta^T \omega^T ]^T$, and the state equation is of the form $\dot x(t) = A x(t)$. Now we discretize the state equation as $x(t+1) = (I+A \Delta t ) x(t)$, where $\Delta t $ is the sampling period. We use this discretized model for purpose of computation. \\
For simulation purposes, we consider New England 39 bus system, which is a reduced model for New England and Canada. The  $A$ matrix is computed using the MatPower package (which is supported on Matlab) \cite{mat_power}. It is verified that the eigen values of the $A$ matrix lies inside the unit circle and the link outage results in output directions, which are observable.
We consider two set of possible contingencies $(37-25, 36-23, 33-19, 39 - 9)$ (The corresponding lines have been marked in green in Fig. \ref{fig_39_BUS_System}). For this set of contingencies, the sensitivity values both $F_{ij}$'s and $S_{ij}$'s are plotted in Fig. \ref{fig_39_bus_bar} (a).  It can be observed the vector $E$ and $S$ manifest similar pattern. This signifies the uncertainties are not interacting much. The relative order of the links in terms of the sensitivity is preserved. This is also reflected in the value of interaction factor $I$, which is $0.003$.
 \begin{figure}[h]
\begin{center}
\mbox{
\subfigure[]{\scalebox{.8}{\includegraphics[width=2.6 in, height=2.4 in]{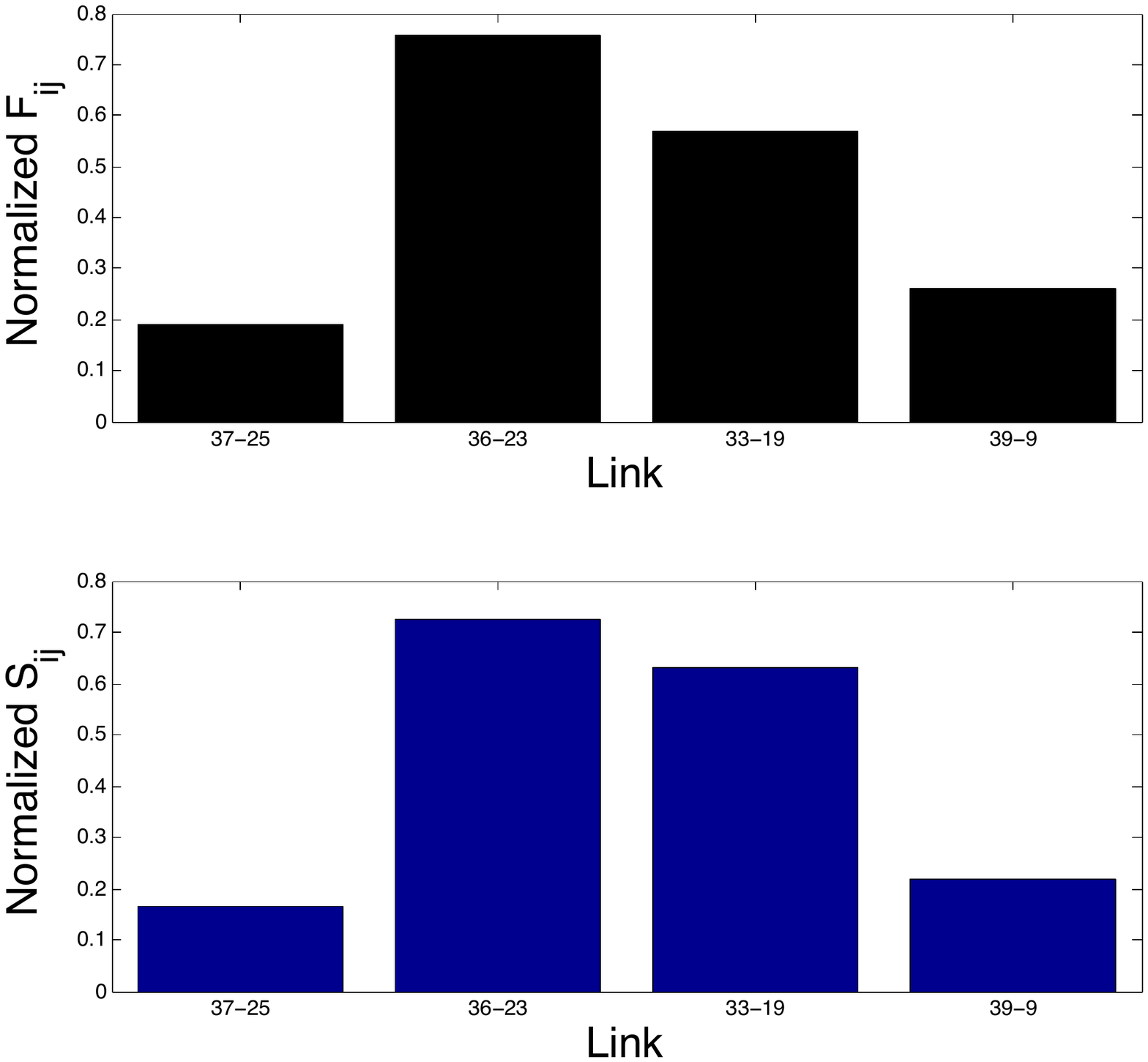}}}
\subfigure[]{\scalebox{.26}{\includegraphics[width=7.7 in, height=7.5 in]{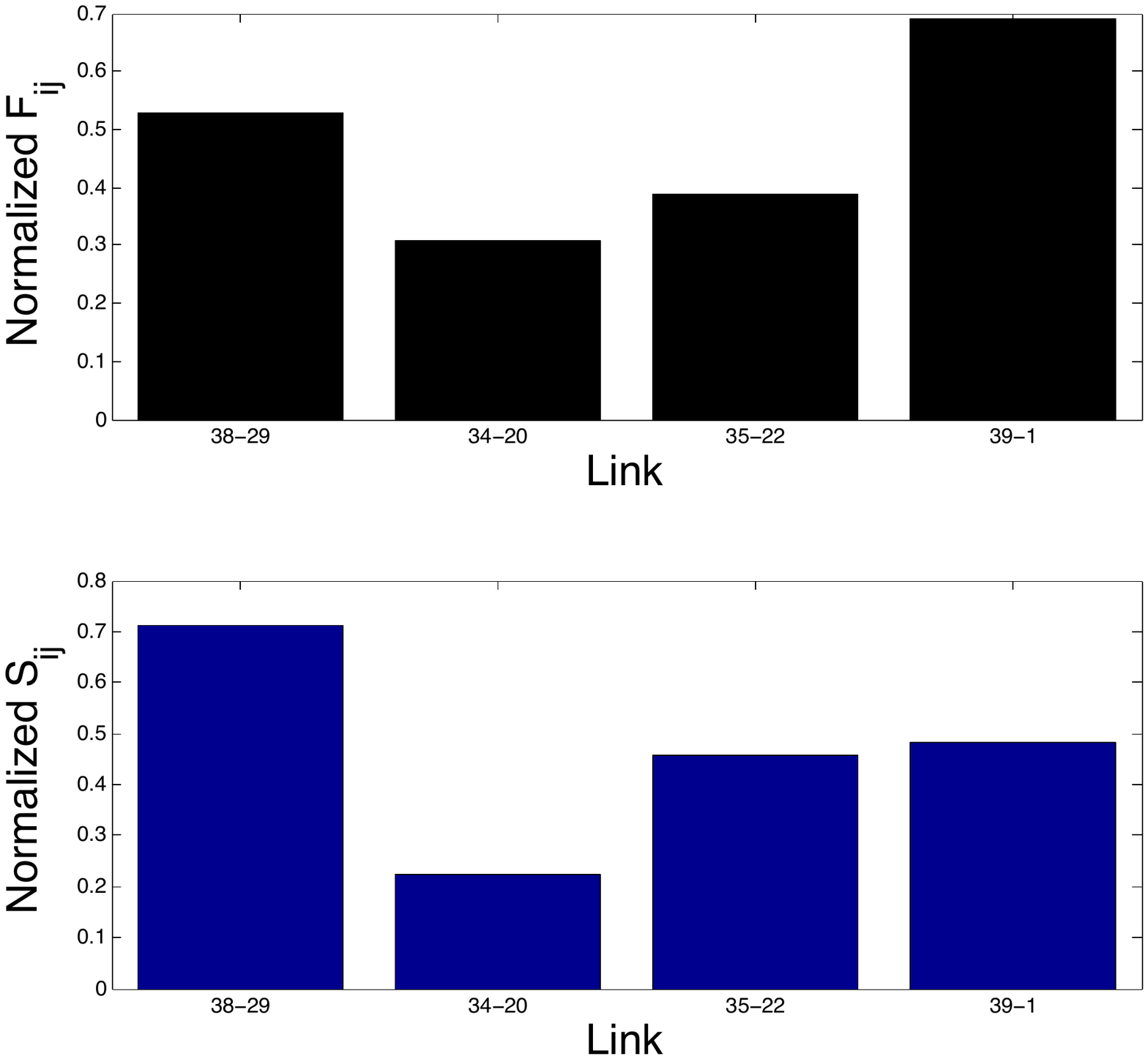}}} }
\caption{ Normalized sensitivity values for first set of contingencies - (a) marked in green in Fig. \ref{fig_39_BUS_System}, (b) marked in red in Fig. \ref{fig_39_BUS_System}. }
\label{fig_39_bus_bar}
\end{center}
\end{figure}
Next, we consider another set of contingencies $(38-29, 34-20, 35-22, 39-1)$  (The corresponding lines have been marked in red in Fig. \ref{fig_39_BUS_System}). For this set of contingencies, the two set of contingencies normalized $F_{ij}$'s and $S_{ij}$'s can be seen in Fig. \ref{fig_39_bus_bar} (b). It can be observed the vectors $E$ and $S$ show different trends, as the relative criticality changes in the two cases. This signifies the uncertainties are interacting and in effect the relative sensitivities of the links are changing. It can be seen that, the links that were most critical for single uncertainty becomes relatively more critical and vice versa. This is also reflected in the value of the interaction factor $I$, which becomes $0.10$ in this case.
\section{Conclusions} \label{con}
In this work, we propose a method to compute sensitivity of different uncertain links for $N-k$ contingency analysis in power network. We have proposed a gramian based method to compute the sensitivities. We have shown due to the interaction among uncertainties the ranking might get completely changed. This would motivate a rethinking of the contingency analysis, when multiple outages are considered.  This we have also witnessed in simulations for New England $39$ bus system. Also, we have proposed a method to compute interaction among a set of uncertainties.  In future work, we would pursue similar studies for transient stability.

\section{Appendix} \label{appen}
Here we would prove Theorem \ref{theorem_main1}. \\
\begin{proof}
By defining appropriate matrices the system in \eqref{system_robust-form} can be written in compact form as,
\begin{equation}
x(t+1) =\left (A+B \Delta(t) C \right) x(t) \label{system_robust-form1}
\end{equation}
where, $\Delta(t)$ comprises of the uncertainties $\delta_{k \ell} $'s.  The first step of proving Theorem \ref{theorem_main1} will be the following lemma. 
  \begin{lemma} \label{lem1} The system, described by \eqref{system_robust-form1},  is mean square exponentially stable only  if,   there exists a  symmetric, positive definite matrix $\hat{P}$  and $\epsilon > 0$, such that,
\begin{equation} \label{lyap1}
E_{{\Delta(t)}}\left [ \left (A + B \Delta(t) C  \right)^T {\hat P} \left(A + B \Delta(t) C  \right) \right]  - \hat{P} \le \epsilon I,
\end{equation}
and, $\gamma_0 \le \parallel \hat P \parallel \le \gamma_1  . $
\end{lemma}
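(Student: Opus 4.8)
The plan is to prove the \emph{only if} direction constructively: assuming the system \eqref{system_robust-form1} is mean square exponentially stable, I would build the certificate $\hat{P}$ explicitly as a mean-square gramian rather than invoke an abstract Lyapunov theorem. Write the one-step random map as $M(t) := A + B\Delta(t) C$ and let $\Phi(t) := M(t-1)M(t-2)\cdots M(0)$ be the random transition matrix, so that $x(t) = \Phi(t)x(0)$ and $\Phi(0) = I$. The natural candidate is the formal sum $\hat{P} := \sum_{t=0}^{\infty} E\left[\Phi(t)^T \Phi(t)\right]$, the stochastic analogue of the deterministic observability gramian. Everything then reduces to showing this sum converges, is positive definite with the stated norm bounds, and satisfies the one-step inequality \eqref{lyap1}.

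The first key step is to express each summand through the linear Lyapunov operator $\mathcal{T}(X) := E\left[(A+B\Delta C)^T X (A+B\Delta C)\right]$ acting on symmetric matrices, where $\Delta$ carries the common law of the i.i.d.\ sequence $\Delta(t)$. I would show by induction that $E\left[\Phi(t)^T \Phi(t)\right] = \mathcal{T}^t(I)$. The base case $t=0$ is immediate; for the inductive step I would peel off the \emph{first} factor, writing $\Phi(t+1) = \big(M(t)\cdots M(1)\big) M(0)$. Since $\Delta(0)$ is independent of $\Delta(1),\dots,\Delta(t)$ and the sequence is identically distributed, the trailing product $M(t)\cdots M(1)$ has the same law as $\Phi(t)$, so conditioning on $M(0)$ gives $E[\Phi(t+1)^T\Phi(t+1)] = E\big[M(0)^T \mathcal{T}^t(I)\, M(0)\big] = \mathcal{T}\big(\mathcal{T}^t(I)\big) = \mathcal{T}^{t+1}(I)$. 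Getting this conditioning order right, so that the recursion closes as $\mathcal{T}^{t+1}(I)$ rather than as an adjoint operator, is where I expect essentially all the care to be needed.

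With that identity in hand the remaining steps are short. Definition \ref{stab_def} yields $x(0)^T \mathcal{T}^t(I)\, x(0) = E\|x(t)\|^2 \le K\beta^{t-1}\|x(0)\|^2$ for $t\ge 1$, hence $\|\mathcal{T}^t(I)\| \le K\beta^{t-1}$; summing the geometric series shows $\hat P$ converges and $\|\hat P\| \le 1 + \tfrac{K}{1-\beta} =: \gamma_1$, the upper bound. Because every summand is positive semidefinite and the $t=0$ term equals $I$, we obtain $\hat P \ge I$, which delivers both positive definiteness and the lower bound $\gamma_0 := 1 \le \|\hat P\|$. Finally, exchanging $\mathcal{T}$ with the convergent series gives $\mathcal{T}(\hat P) = \sum_{t=0}^{\infty}\mathcal{T}^{t+1}(I) = \sum_{t=1}^{\infty}\mathcal{T}^t(I) = \hat P - I$, so $\mathcal{T}(\hat P) - \hat P = -I$, which is the unpacked form of \eqref{lyap1} and satisfies $-I \le \epsilon I$ for every $\epsilon > 0$ (the construction in fact produces a strict decrease, making the $\epsilon$-relaxed bound automatic). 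The only subtlety to flag is this interchange of $\mathcal{T}$ with the infinite sum, which is legitimate since $\mathcal{T}$ is a bounded linear map on the finite-dimensional space of symmetric matrices and the series converges in norm.
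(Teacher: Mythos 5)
Your proposal is correct and follows essentially the same route as the paper: both construct $\hat P$ as an infinite series of expected squared random state-transition products and use the mean-square exponential stability bound to sum the resulting geometric series and verify the one-step Lyapunov inequality. The only differences are clean-ups on your side --- you drop the paper's $\beta_1^{-n}$ weighting (obtaining the exact identity $\mathcal{T}(\hat P)-\hat P=-I$ rather than the contraction $\mathcal{T}(\hat P)\le\beta_1\hat P$) and you include the $t=0$ identity term, which yields the lower bound $\hat P\ge I$ directly and avoids the paper's appeal to Assumption \ref{ja_bound} ($A^TA\ge L>0$) for positive definiteness.
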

\begin{proof}
Let us choose, $0<\beta< \beta_1<1$, where $\beta$ comes from the stability property described in Definition \ref{stab_def} . Next let us construct the matrix $\hat P $ as following, $\hat P : =  \sum_{n=0}^{\infty} \left ( \frac{1}{{{\beta}_1}^{^{n}}} \right ) E_{\Delta} \left [ \prod_{k=0}^n \left (A + B  \Delta (k) C  \right ) ^T  \prod_{k=0}^n \left (A + B \Delta(k) C \right )  \right]. $
From the construction and using the fact $\{ {\delta}_{k \ell} \}$ is a sequence of i.i.d. random variables, \[\hat{P} \ge  A^T A+ \sum_{(k, \ell) \in \mathcal{S}}\sigma_{k \ell}^2 \bar C^T_{k\ell}  \bar B^T_k \bar B_k C_{k\ell}. \]
According to Assumption \ref{ja_bound}, we get $   A^T A \ge L  > 0 $, which implies \eqref{lyap1}. Next, we prove the upper  bound on norm of $\hat P$.  Using mean square exponential stability of \eqref{system_robust-form1},
\begin{align*}
& \eta^T_0 \hat P \eta_0  <  K \sum_{n = 0}^{\infty}  {\left (\frac{{\beta}}{{\beta}_1}\right)^{n}} \eta_0^T ~ \eta_0 = \frac{K {\beta}_1}{\beta-{\beta}_1} \eta_0^T ~ \eta_0.
\end{align*}
We have already shown $\hat P \ge A^T  A \ge H>0$. Hence we can choose $\gamma_1, \gamma_2$ accordingly. From the construction we get,
\begin{align*}
& E_{{\Delta(t)}}\left [ \left (A + B \Delta(t) C  \right)^T { \hat P} \left(A + B \Delta(t) C  \right) \right]  < {\beta}_1 \hat{P}  \\
&= \hat{P} - (1 - {\beta}_1) \hat{P}\le \hat{P} \le  \hat{P} - \epsilon I, \text{where} ~ \epsilon > (1-{\beta}_1) {\gamma}_0 .
\end{align*}
\end{proof}
Now we are at a position to prove the Theorem \ref{theorem_main1}.
Equation (\ref{lyap1}) simplifies to,
\begin{align} \label{lyap_in}
 A^T \hat P A - \hat P  \le - \left ( \epsilon I + \sum_{(k, \ell) \in \mathcal{S}} \sigma_{k \ell}^2  C^T_{k\ell} \bar B^T_k \hat P \bar B_k C_{k\ell} \right ) .  
 \end{align}
 We define,
 \begin{align*}
 A^T \hat P A - \hat P  : = - \left (  \sum_{(k, \ell) \in \mathcal{S}} \sigma_{k \ell}^2  C^T_{k\ell} \bar B^T_k \hat P \bar B_k C_{k\ell} + R + \epsilon I \right ) .
 \end{align*}
for some , $R \ge 0$. In the next step, we decompose the positive definite matrix \[ \left ( \epsilon I + R +  \sum_{k, \ell} \sigma_{k \ell}^2  C^T_{k\ell}T \bar B^T_k \hat P  \bar B_k C_{k\ell}  \right ) .\] Let us define a set of basis vectors  $e^T_i \in \mathbb{R}^n$ as following, $ e_i  : = (\underbrace{0 \ldots, 0}_{i-1},1, \underbrace{0 \ldots, 0}_{N-i }). $
 With the aid of these vectors the identity matrix could be expressed as, $ I = \sum_{i} e^T_i e_i$.
$R$ is a positive semidefinite matrix. Let $r_j$'s be the left eigenvectors of $R$ corresponding to the positive eigenvalue ${\lambda}_j > 0$. This gives us the following decomposition $R = \sum_{j} \lambda_j  r^T r_j$. 
Now, $C_{k\ell}$'s  are not null row vectors from the observability assumption. Also let $ \mathcal{C}^l \le \parallel C_{k\ell} \parallel^2 \le \mathcal{C}^u$. We have denoted the set of all uncertain link as $\mathcal{S}=\{(k, \ell) | (k, \ell) ~ \text{is an uncertain link}\} $, and  we also denote cardinality of the set $\mathcal{S}$ by $N^*$ . We take an arbitrary ordered pair  $(k_1, {\ell}_1)$ from the set $\mathcal{S}$. Next, we decompose all other vectors $C_{p q}$'s, $e_i$'s, and $r_i $'s along this direction and its orthogonal direction. These decompositions could be expressed as following,
\begin{align}
C_{p q} &= \kappa^{pq}_{k _1 {\ell}_1}   C_{k_1 {\ell}_1} + \theta^{k_1 {\ell}_1}_{pq} , ~ C_{k_1 {\ell}_1} \left ( \theta^{k_1 {\ell}_1}_{pq} \right )^T = 0, \\
e_i &= \phi^{i}_{k_1 {\ell}_1}  C_{k_1 {\ell}_1} + \nu_{k_1 {\ell}_1}^{i} , ~ C_{k_1 {\ell}_1} \left ( \nu^{k_1 {\ell}_1}_{i} \right )^T = 0, \label{e_i_decom} \\
r_j &= \xi^{j}_{k_1 {\ell}_1}  C_{k_1 {\ell}_1} + \omega_{k_1 {\ell}_1}^{j}, ~ C_{k_1 {\ell}_1} \left ( \omega^{k_1 {\ell}_1}_{j} \right )^T = 0. 
\end{align}
where, $\kappa^{pq}_{k_1 \ell_1}  $, $\phi^{i}_{k_1 \ell_1}$, and $\xi^{i}_{k \ell} $ are scalars. 
 Let us now define the matrix, 
\begin{align*} 
 \mathcal{T}^0  &: = \sum_{p, q \in \mathcal{S}} \sigma_{p q}^2  C^T_{p q} \bar B^T_p \hat P \bar B_p C_{p q} +  \sum_{j}  \lambda_j   r^T_j  r^T_j + \frac{\epsilon}{N^*}  \sum_{i} e_i^T e_i.
\end{align*}
Using the decompositions of $C_{p q}$'s, $e_i$'s, and $r_i$'s we get,
\begin{align*}
& \mathcal{T}^0   =  \frac{\epsilon}{N^*}  \sum_{i} { \nu^{k_1 \ell_1}_{i}}^T  \nu^{k_1 \ell_1}_{i} +\sum_{p, q} \sigma_{p q}^2   \bar B^T_p \hat P \bar B_p {\theta^{k_1 \ell_1}_{pq}}^T \theta^{k_1 \ell_1}_{pq}  \\
& +   \sum_{j}  \lambda_j {\omega^{k_1 \ell_1}_{j}}^T \omega^{k_1 \ell_1}_{j} + (  \sum_{p, q} ( \kappa^{pq}_{k_1 \ell_1}  )^2\sigma_{p q}^2   \bar B^T_p \hat P \bar B_p \\ 
& +  \sum_{j}   (  \phi^{j}_{k_1 \ell_1}   )^2  \lambda_j    + \frac{\epsilon}{N^*}  \sum_{i}   ( \xi^{i}_{k_1 \ell_1}   )^2   ) C^T_{k_1 \ell_1} C_{k_1 \ell_1}, \\
& :=  \tilde{{\alpha}}^{k_1 \ell_1} C^T_{k_1 \ell_1} C_{k_1 \ell_1}  + \tilde {\mathcal{T}}_t^1.
\end{align*}
where,
\begin{align*}
& \tilde{{\alpha}}^{k_1 \ell_1}   = \sum_{p, q} ( \kappa^{pq}_{k_1 \ell_1}  )^2\sigma_{p q}^2   \bar B^T_p \hat P \bar B_p  +  \sum_{j}   (  \phi^{i}_{k_1 \ell_1}   )^2  \lambda_j+ \frac{\epsilon}{N^*}  \sum_{i}   ( \xi^{j}_{k_1 \ell_1}    )^2,  \\
 & \tilde {\mathcal{T}}^1  =  \sum_{p, q} \sigma_{p q}^2   \bar B^T_p \hat P \bar B_p {\theta^{k_1 \ell_1}_{pq}}^T \theta^{k_1 \ell_1}_{pq} +   \sum_{j}  \lambda_j  {\omega^{k_1 \ell_1}_{j}}^T \omega^{k_1 \ell_1}_{j}  \\
 & + \frac{\epsilon}{N^*}  \sum_{i} { \nu^{k_1 \ell_1}_{i}}^T  \nu^{k_1 \ell_1}_{i}.
\end{align*}
It can be noted that, $C_{k_1 \ell_1}  \tilde {\mathcal{T}}^1 = 0$. Next, we choose another element from $\mathcal{S}$, say $(k_2,{\ell}_2)$, corresponding to another uncertain link, and define, $ {\mathcal{T}}^1: =  \tilde {\mathcal{T}}^1 + \frac{\epsilon}{N^*}  \sum_{i} e_i^T e_i. $
And decompose the ${\mathcal{T}}^1$ as, $\mathcal{T}^1 =  \tilde{{\alpha}}^{k_2 {\ell}_2} {C_{k_2 {\ell}_2}}^T C_{k_2 {\ell}_2}  + \tilde {\mathcal{T}}^2. $
This process is continued until every uncertain link is exhausted. Then, we will have,
\begin{align}
&\sum_{(k, \ell) \in \mathcal{S}} \sigma_{k \ell}^2  C^T_{k\ell} \bar B^T_k \hat P \bar B_k C_{k\ell} + R + \epsilon I =  \sum_{i=0}^{N^* - 1} \mathcal{T}^i \nonumber \\
& = \sum_{(k, \ell) \in \mathcal{S}}  \tilde{{\alpha}}^{k \ell}  {C_{k \ell}}^T C_{k \ell} + \tilde {\mathcal{T}}^{N^*}. \label{1st_decom}
\end{align}
It can be noted that the matrix $\tilde {\mathcal{T}}^{N^*}$ is positive semi definite. Let $t_j$'s be the left  eigenvectors of   $\tilde {\mathcal{T}}^{N^*}$ corresponding to the each of the eigenvalue ${\Lambda}_j  > 0$. This gives, $ \tilde {\mathcal{T}}^{N^*} = \sum_j {\Lambda}_j  {t^T _j} t_j. $
 Again, we do similar decomposition for the matrix $\tilde {\mathcal{T}}^{N^*} $ .We decompose the vectors $t_j $ along the directions $C_{k_1 \ell_1}$ for $(k_1, \ell_1) \in \mathcal{S}$,
\begin{align}
 t_j = \Xi^{j}_{k_1 \ell_1}  C_{k_1 \ell_1} + \Omega_{k_1 \ell_1}^{j}, ~ C_{k_1 \ell_1} \left ( \Omega^{k_1 \ell_1}_{j} \right )^T = 0.  \label{t_decom} 
 \end{align}
Next, we define, $ \tilde {\mathcal{U}}^{0} := \tilde {\mathcal{T}}^{N^* -1} = \sum_j {\Lambda}_j  {t^T _j}  t_j. $ Using Eq. \eqref{t_decom},
\begin{align*}
& \tilde {\mathcal{U}}^{0}   = \left ( \Xi^{j}_{k_1 \ell_1} \right) ^2  C^T _{k_1 \ell_1} C_{k_1 \ell_1} + { \Omega_{k_1 \ell_1}^{j}}^T  \Omega_{k_1 \ell_1}^{j}  \\
& : = \hat{{\alpha}}^{k_1 \ell_1} C^T _{k_1 \ell_1} C_{k_1 \ell_1} + \tilde {\mathcal{U}}^{1}.
\end{align*}
where, $\hat{{\alpha}}^{k_1 \ell_1}  = \left ( \Xi^{j}_{k_1 \ell_1} \right) ^2 \ge 0$. Next, we choose another element $(k_2, {\ell}_2) \in \mathcal{S}$ and decompose $\tilde {\mathcal{U}}^{1}$. This gives us, $ \tilde {\mathcal{U}}^{1}= \hat{{\alpha}}^{k_2 {\ell}_2} C^T _{k_2 {\ell}_2} C_{k_2 {\ell}_2} + \tilde {\mathcal{U}}^{2}. $ This procedure is continued until the set $\mathcal{S}$ is exhausted. In this case, we define ${Q} : = \tilde {\mathcal{U}}^{N^*}$. This gives us,
\begin{align}
 \tilde {\mathcal{T}}^{N^*} = \tilde {\mathcal{U}}^{0} = \sum_{(k \ell) \in \mathcal{S} } \hat{{\alpha}}^{k \ell}  C^T _{k \ell} C_{k \ell} + {Q}. \label{2nd_decom}
\end{align}
where, $\hat{{\alpha}}^{k \ell}  \ge 0$ and $C_{k \ell} {Q} = 0, \forall k, \ell$.
Combining Eq. \eqref{1st_decom} and \eqref{2nd_decom} we get,
\[ \sum_{(k, \ell) \in \mathcal{S}} \sigma_{k \ell}^2  C^T _{k\ell} \bar B^T_k \hat P B_k C_{k\ell} + R+ \epsilon I =  \sum_{(k \ell) \in \mathcal{S} } {\alpha}^{k \ell} C^T _{k \ell} C_{k \ell} + Q, \] where, ${\alpha}^{k \ell} : = \tilde{{\alpha}}^{k \ell} +\hat{{\alpha}}^{k \ell}$. It is to be noted that ${\alpha}^{k \ell} < \frac{{\gamma}_1}{\mathcal{C}^l}$ (where the $\gamma_1$ comes from the upper bound of the matrix $\hat P$ from Lemma \ref{lem1}, and ${\mathcal{C}^l}$ is the lower bound of $\parallel C^T _{k\ell} \parallel , \forall (k, \ell) \in \mathcal{S}$).  Next, we would show a lower bound on ${\alpha}^{k \ell}$ for all $(k, \ell) \in \mathcal{S}$. For that purpose, let us consider the Eq. \eqref{e_i_decom}, $ e_i = \phi^{i}_{k \ell}  C_{k \ell} + \nu_{k \ell}^{i}, ~ C_{k \ell} \left ( \nu^{k \ell}_{i} \right )^T = 0. $
Multiplying both sides by $C^T _{k \ell}$ from right, $ \sum_i |\phi^{i}_{k \ell} |^2  = \frac{1}{\parallel C_{k \ell}  \parallel^2} \ge \frac{1}{\mathcal{C}^u} $ .
This gives us,
\begin{align*}
\tilde{{\alpha}}^{k \ell}   & = \sum_{p, q} ( \kappa^{pq}_{k \ell}  )^2\sigma_{p q}^2   \bar B^T_p \hat P B_p  +  \sum_{j}   (  \phi^{i}_{k \ell}   )^2  \lambda^i  \\
& + \frac{\epsilon}{N^*}  \sum_{i}   ( \xi^{j}_{k \ell}    )^2  \ge \frac{\epsilon}{\mathcal{C}^u N^*} \ge \frac{\epsilon}{\mathcal{C}^u M^2 n^2}, ~ ~~\text{as}~ \left ( M n\right )^2 \ge N^*.
\end{align*}
This means ${\alpha}^{k \ell}  = \tilde{{\alpha}}^{k \ell} +\hat{{\alpha}}^{k \ell}  \ge \frac{\epsilon}{\mathcal{C}^u M^2 n^2}, ~ \forall (k, \ell) \in \mathcal{S} $. Hence,
\begin{align*}
\sum_{(k, \ell) \in \mathcal{S}} \sigma_{k \ell}^2  C^T _{k\ell} \bar B^T_k \hat P \bar B_k C_{k\ell} + R + \epsilon I  =\sum_{k \ell } {\alpha}^{k \ell}  C^T _{k \ell} C_{k \ell} + Q,
\end{align*} 
Finally, we can say,
\begin{align}
 A^T  \hat P A - \hat P  
 = -\left (  \sum_{(k, \ell) \in \mathcal{S}} {\alpha}^{k \ell} C^T_{k \ell} C_{k \ell} + {Q} \right ),  \label{final_decom}
\end{align} 
$\frac{{\gamma}_1}{\mathcal{C}^l} > {\alpha}^{k \ell} \ge \frac{\epsilon}{v_{k \ell}^2 M^2 n^2}$ and $C_{k \ell}  {Q} = 0, \forall (k, \ell) \in \mathcal{S}$.  At this stage, we use the upper and lower bound property of $\alpha^{k \ell}$ to reach the condition described in Theorem \ref{theorem_main1}. Eq. \eqref{lyap_in} further necessarily implies, $ A^T \hat P A - \hat P +\sum_{(k, \ell) \in \mathcal{S}}\sigma_{k \ell}^2  C^T_{k\ell} \bar B^T_k \hat P \bar B_k C_{k\ell} \le  0. $ Combining with Eq. \eqref{final_decom}, we get $ \sum_{(k, \ell) \in \mathcal{S}} \left ( \sigma_{k \ell}^2   \bar B^T_k \hat P \bar B_k  - {\alpha}^{k \ell}\right) C^T_{k\ell} C_{k\ell} - {Q} \le  0.$ Using the fact $C_{k \ell}  {Q} = 0, ~ \forall k, \ell$, we obtain  $ \sum_{(k, \ell) \in \mathcal{S}} \left ( \sigma_{k \ell}^2   \bar B^T_k \hat P \bar B_k  - {\alpha}^{k \ell} \right) C^T_{k\ell} C_{k\ell}  \le  0.$
 Taking trace on both sides,
 \begin{equation}
 \sum_{(k, \ell) \in \mathcal{S}}  \left ( \sigma_{k \ell}^2  \bar B^T_k \hat P \bar B_k - {\alpha} ^{k \ell} \right)  C_{k\ell}   C^T_{k\ell} \le 0. \label{trace_cnd}
 \end{equation}
From Eq. (\ref{final_decom}),
\begin{align*} 
& \hat{P}= \sum_{i=0}^{\infty}  \left [ \left ( \prod_{m=0}^i A \right ) ^T  \left ( \left (\sum_{(k, \ell) \in \mathcal{S}} {\alpha}^{k \ell} C^T_{k\ell}  C_{k\ell} \right ) - Q \right) \left ( \prod_{m=0}^i A \right ) \right] .
 \end{align*}
Since $Q\leq 0$, it follows that $\tilde P \leq \hat P$, where $\tilde P$ is defined as follows: \[ \tilde{P} := \sum_{i=0}^{\infty} \left ( \prod_{m=0}^i A \right ) ^T \left (\sum_{(k, \ell) \in \mathcal{S}} {\alpha}^{k \ell} C^T_{k\ell}   C_{k\ell} \right )  \left ( \prod_{m=0}^i A \right). \]
 Using the above definition it follows that $\tilde P$ satisfies, $ A^T \tilde P A  -\tilde{P} = -  \left (\sum_{(k, \ell) \in \mathcal{S}} {\alpha}^{k \ell}  C^T_{k\ell}  C_{k\ell} \right ). $ 
 Substituting $\tilde P$ for $\hat P $ in (\ref{trace_cnd}), we get,  $  \sum_{(k, \ell) \in \mathcal{S}}  \left ( \sigma_{k \ell}^2  \bar B^T_k  {\tilde P} \bar B_k - {\alpha}^{k \ell}  \right)  C_{k\ell}   C^T_{k\ell} \le 0. $ 
We also define, 
 $ {\alpha}_* = \min_{k\ell} \alpha^{k \ell}$. 
We construct the matrix $P$ as, \[ P : = \sum_{i=0}^{\infty} \left ( \prod_{m=0}^i A \right ) ^T \left (\sum_{(k, \ell) \in \mathcal{S}}  C^T_{k\ell}   C_{k\ell} \right )  \left ( \prod_{m=0}^i A \right)  \] . The existence of matrices ${P} = P^T > 0$ is guaranteed by observability of the pair $(A,C_{k \ell})$. It can be noted,
$ A^T {P} A - P  =  -  \sum_{(k, \ell) \in \mathcal{S}} C^T_{k \ell}   C_{k \ell} . $  Also, it can be observed that ${\alpha}_* P \le \tilde{P} \le \hat{P}$. This means $  \sum_{(k, \ell) \in \mathcal{S}}  \left ( \sigma_{k \ell}^2  \bar B^T_k  \alpha_* P \bar B_k - {\alpha} ^{k \ell}  \right)  C_{k\ell}  C^T_{k\ell}  \le 0 , $  and,
From \eqref{final_decom},
\begin{align}
  &\sum_{(k, \ell) \in \mathcal{S}}  \sigma_{k \ell}^2     \bar B^T_k  P  \bar B_k   C_{k\ell}   C^T_{k\ell}  \le   \sum_{(k, \ell) \in \mathcal{S}}\frac{ {\alpha}^{k \ell}}{ {\alpha}_*}  C_{k\ell}   C^T_{k\ell}, \nonumber \\
   &\sum_{(k, \ell) \in \mathcal{S}}  \sigma_{k \ell}^2     \bar B^T_k  P  \bar B_k   C_{k\ell}   C^T_{k\ell} \le  \bar \alpha \sum_{(k, \ell) \in \mathcal{S}}  C_{k\ell}   C^T_{k\ell}.  \nonumber
  \end{align}
  We use the fact $\frac{{\gamma}_1}{\mathcal{C}^l} > {\alpha}^{k \ell} \ge \frac{\epsilon}{\mathcal{C}^u M^2 n^2}$ to get $\frac{ {\alpha}^{k \ell} }{ {\alpha}_* } < \frac{{\gamma}_1 \mathcal{C}^u M^2 n^2}{\epsilon \mathcal{C}^l} : = \bar \alpha. $  It can be noted that $\bar{\alpha} \ge 1$. Hence the proof.
  \end{proof}
  
\bibliographystyle{plain}
\bibliography{ref_new}  
  
\end{document}